\def\DateTime{10/March/2026}
\def\yes{\if00}
\def\no{\if01}
\def\iftenpt{\no}
\def\ifelevenpt{\no}
\def\iftwelvept{\yes}
\def\ifquery{\yes}
\theoremstyle{plain}
\newtheorem{theorem}{Theorem}[section]
\newtheorem{proposition}[theorem]{Proposition}
\newtheorem{lemma}[theorem]{Lemma}
\newtheorem{corollary}[theorem]{Corollary}
\newtheorem{claim}{Claim}[theorem]
\theoremstyle{definition}
\newtheorem{remark}[theorem]{Remark}
\newtheorem{example}[theorem]{Example}
\newtheorem{conjecture}[theorem]{Conjecture}
\numberwithin{equation}{section}
\newcommand{\ZZ}{{\mathbb{Z}}}
\newcommand{\QQ}{{\mathbb{Q}}}
\newcommand{\RR}{{\mathbb{R}}}
\newcommand{\PP}{{\mathbb{P}}}
\newcommand{\OO}{{\mathcal{O}}}
\newcommand{\Proj}{\operatorname{Proj}}
\newcommand{\Spec}{\operatorname{Spec}}
\newcommand{\adeg}{\widehat{\operatorname{deg}}}
\newcommand{\GCD}{\operatorname{GCD}}
\def\query#1{\setlength\marginparwidth{65pt} 
\marginpar{\raggedright\fontsize{7.81}{9} 
\selectfont\upshape\hrule\smallskip 
#1\par\smallskip\hrule}} 
\def\query#1{}
\newcommand{\ndot}{\raisebox{.4ex}{.}}
\def\colorsout#1{\bgroup\markoverwith{\textcolor{#1}{\rule[0.5ex]{2pt}{0.7pt}}}\ULon} 
\def\coloruline#1{\bgroup\markoverwith{\textcolor{#1}{\rule[-0.5ex]{2pt}{0.7pt}}}\ULon} 
\definecolor{rose}{rgb}{1.0, 0.0, 0.5}
\definecolor{rosewood}{rgb}{0.4, 0.0, 0.04}
\definecolor{darkspringgreen}{rgb}{0.09, 0.45, 0.27}
\definecolor{mgreen}{rgb}{0.09, 0.45, 0.27}
\definecolor{orange-red}{rgb}{1.0, 0.27, 0.0}
\definecolor{mred}{rgb}{0.83, 0.0, 0.0}
\DeclareSymbolFont{bbold}{U}{bbold}{m}{n}
\DeclareMathSymbol{\bbalpha}{\mathord}{bbold}{"0B}
\DeclareMathSymbol{\bbbeta}{\mathord}{bbold}{"0C}
\DeclareMathSymbol{\bbgamma}{\mathord}{bbold}{"0D}
\DeclareMathSymbol{\bbdelta}{\mathord}{bbold}{"0E}
\DeclareMathSymbol{\bbespilon}{\mathord}{bbold}{"0F}
\DeclareMathSymbol{\bbzeta}{\mathord}{bbold}{"10}
\DeclareMathSymbol{\bbeta}{\mathord}{bbold}{"11}
\DeclareMathSymbol{\bbtheta}{\mathord}{bbold}{"12}
\DeclareMathSymbol{\bbiota}{\mathord}{bbold}{"13}
\DeclareMathSymbol{\bbkappa}{\mathord}{bbold}{"14}
\DeclareMathSymbol{\bblambda}{\mathord}{bbold}{"15}
\DeclareMathSymbol{\bbmu}{\mathord}{bbold}{"16}
\DeclareMathSymbol{\bbnu}{\mathord}{bbold}{"17}
\DeclareMathSymbol{\bbxi}{\mathord}{bbold}{"18}
\DeclareMathSymbol{\bbpi}{\mathord}{bbold}{"19}
\DeclareMathSymbol{\bbrho}{\mathord}{bbold}{"1A}
\DeclareMathSymbol{\bbsigma}{\mathord}{bbold}{"1B}
\DeclareMathSymbol{\bbtau}{\mathord}{bbold}{"1C}
\DeclareMathSymbol{\bbupsilon}{\mathord}{bbold}{"1D}
\DeclareMathSymbol{\bbphi}{\mathord}{bbold}{"1E}
\DeclareMathSymbol{\bbchi}{\mathord}{bbold}{"1F}
\DeclareMathSymbol{\bbpsi}{\mathord}{bbold}{"20}
\author{Atsushi MORIWAKI}
\address{School of General Education, Chubu University, 1200 Matsumoto-cho, Kasugai-shi, Aichi 487-8501, JAPAN}
\email{moriwaki@fsc.chubu.ac.jp}
\title{Arithmetic dynamics and Generalized Fermat's conjecture}
\date{\DateTime}
\thanks{I would like to thank Prof. Kawaguchi for his helpful comments. I also appreciate the anonymous useful  suggestions.}
\begin{document}
\maketitle
\begin{abstract}
In this short paper, we would like to propose generalized Fermat's conjecture over an arithmetic function field in the framework of arithmetic dynamics. We also give several evidences for the conjecture.
\end{abstract}

\section{Introduction}
Let $K$ be an arithmetic function field (i.e., $K$ is finitely generated over $\QQ$) and $S = ((\Omega, \mathcal{A}, \nu), \{ |\ndot|_{\omega} \}_{\omega \in \Omega})$ be a proper adelic structure of $K$
with Northcott's property and $\nu(\mathcal{A}) \not\subseteq \{ 0, 1 \}$
(for the definition of ``adelic structure'' and its related terminology, see Section~\ref{sec:adelic:structure}). 
Let $X$ be a geometrically integral projective scheme over $K$ 
and $L$ be an ample line bundle on $X$.
Fix a positive integer $N_0$. Let $F = \{ f_N \}_{N=N_0}^{\infty}$ be a sequence of endomorphisms of $X$ with the following properties:
\begin{enumerate}[label=\rm (\arabic*)]
\item $f_N^*(L) \simeq L^{\otimes d_N}$ for some integer $d_N \geqslant 2$. 
\item $\lim_{N\to\infty} \deg(f_N) = \infty$. 
\item $f_N \circ f_{N'} = f_{N'} \circ f_{N}$ for all $N, N' \geqslant N_0$. 
\end{enumerate}
The sequence $F$ is called a \emph{system of endomorphisms polarized by $L$}.
We say $F$ is \emph{multiplicative} (resp. \emph{additive}) if $f_N \circ f_{N'} = f_{N'} \circ f_{N} = f_{NN'}$ (resp. $f_N \circ f_{N'} = f_{N'} \circ f_{N} = f_{N+N'}$) for all $N, N' \geqslant N_0$.
For example, the cases of 
Example~\ref{example:projective:space},
Example~\ref{example:abelian:variety}, Example~\ref{example:latte:map} and Example~\ref{example:Chebyshev:map} are multiplicative,  and the case of Example~\ref{example:dynamical:system} is additive.
We can assign the height function $h_F$ on $X(K)$ to $F$ such that 
\[h_F(x) \geqslant 0\ \text{and}\  h_F(f_N(x)) = d_Nh_F(x)\quad (\forall x \in X(K))\]
(c.f., Proposition~\ref{prop:sys:endo:property}). 
Let $Y$ be an equidimensional  subscheme of $X$ and $Y_N := f_N^{-1}(Y)$  ($N \geqslant N_0$). 
We say that $Y_N$ has \emph{Fermat's property over $K$} if \[Y_N(K) \subseteq \{ x \in X(K) \mid h_F(x) = 0 \}\]
(see Remark~\ref{rem:why:Fermat:prop} for the reason why we use the terminology ``Fermat's property'').
In this paper, we would like to propose the following conjecture and give its evidences.

\begin{conjecture}[Generalized Fermat's conjecture]\label{conj:gen:Fermat:conj}
If there is $N_1 \geqslant N_0$ such that $Y_N(K)$ is finite for any integer $N \geqslant N_1$, 
then does there exist $N_2 \geqslant N_0$ such that
$Y_N$ has Fermat's property over $K$ for any $N \geqslant N_2$?
For a multi-indexed version, see Conjecture~\ref{mult:conj:gen:Fermat:conj}.
\end{conjecture}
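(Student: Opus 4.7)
The conjecture subsumes Fermat's Last Theorem as the special case $X = \PP^2_K$, $L = \OO(1)$, $f_N$ the coordinate-wise $N$-th power map, and $Y$ the hyperplane $\{x_0 + x_1 + x_2 = 0\}$, for which $Y_N$ is the classical Fermat curve of degree $N$; accordingly one cannot expect a short elementary argument. The plan is to reason by contrapositive: starting from a cofinal failure of Fermat's property, produce some $N \geq N_1$ for which $Y_N(K)$ is infinite.

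Concretely, suppose there exist $N_k \to \infty$ and $x_k \in Y_{N_k}(K)$ with $h_F(x_k) > 0$. Split into the bounded case $\sup_k h_F(x_k) < \infty$ and the unbounded case. In the bounded case, Northcott's property of $S$ forces $\{x_k\}$ to be a finite subset of $X(K)$, so some $x \in X(K)$ with $h_F(x) > 0$ lies in $Y_{N_k}$ for infinitely many $k$. Using commutativity, the points $f_{N_k}(x) \in Y(K)$ have heights $d_{N_k} h_F(x) \to \infty$, and in the multiplicative case the auxiliary points $f_M(x) \in Y_{N_k/M}(K)$ with $M \mid N_k$ and $N_k/M \geq N_0$ give further $K$-rational populations of positive height on many $Y_{N'}$.

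The crux is then to convert this over-population by $K$-rational points of unbounded height into a positive-dimensional $F$-preperiodic subvariety $Z \subseteq Y$. If one can further arrange $Z$ to be defined over $K$ and to carry infinitely many $K$-rational points, then the preimages $f_N^{-1}(Z) \subseteq Y_N$, which remain positive-dimensional and preperiodic, also carry infinitely many $K$-rational points, contradicting finiteness of $Y_N(K)$ for some $N \geq N_1$. The unbounded case should be reducible to the bounded one by rescaling: replace $x_k$ by $f_M(x_k) \in Y_{N_k/M}(K)$ with $M$ chosen so that the new heights $d_M h_F(x_k)$ fall in a bounded window, which uses the multiplicative identity $f_{N/M} \circ f_M = f_N$ in an essential way.

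The principal obstacle is the middle step: extracting a positive-dimensional preperiodic subvariety from an infinite collection of $K$-rational points of unbounded height, and ensuring that such a subvariety carries infinitely many genuine $K$-rational points rather than only $\overline{K}$-rational ones. This is a form of the arithmetic Bogomolov--Mordell--Lang principle for arbitrary polarised dynamical systems over an arithmetic function field equipped with a proper adelic structure, and it is currently established only in special geometric settings: abelian varieties by Faltings--Raynaud, $\PP^n$ with monomial maps by Laurent's theorem, and certain polarised systems via the Yuan--Zhang equidistribution machinery. Consequently the evidences presented in the sequel will address precisely those cases where the requisite geometric input is available, rather than aiming at a uniform proof of Conjecture~\ref{conj:gen:Fermat:conj}.
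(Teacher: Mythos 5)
The statement you are addressing is a \emph{conjecture}: the paper does not prove it, and indeed cannot, since (as you and Remark~\ref{rem:why:Fermat:prop} both observe) it subsumes Fermat's Last Theorem and much more. What the paper actually establishes is Theorem~\ref{thm:main}, a collection of partial results, and your proposal should be measured against that. Judged as a proof of Conjecture~\ref{conj:gen:Fermat:conj}, your text has a gap that you yourself name: the ``crux'' step of extracting from infinitely many $K$-rational points of unbounded height a positive-dimensional, $F$-preperiodic subvariety defined over $K$ and still carrying infinitely many $K$-points. That step is a dynamical Bogomolov--Mordell--Lang statement which, in the stated generality (arbitrary polarised commuting systems over an arithmetic function field with a proper adelic structure), is not available and is plausibly harder than the conjecture itself. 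There is also a smaller unjustified move in your bounded case: Northcott gives you that $\{x_k\}$ is finite, hence some fixed $x$ with $h_F(x)>0$ lies in $Y_{N_k}$ for infinitely many $k$; but then $f_{N_k}(x)\in Y(K)$ with $h_F(f_{N_k}(x))=d_{N_k}h_F(x)\to\infty$ only tells you $Y(K)$ is infinite, and the conjecture's hypothesis does \emph{not} assume $Y(K)$ finite --- only $Y_N(K)$ for large $N$. So even the bounded case does not close.

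For contrast, the mechanism the paper uses for its evidence (Theorem~\ref{thm:main}) is elementary and runs in the opposite direction from your contrapositive: one takes a finite set $S$ (namely $Y(K)$ in part (1), or $Y_p(K)$ in part (3)), uses Northcott to get a positive height gap $a(h_F)=\inf\{h_F(x) : h_F(x)>0\}>0$, and then observes that for $N$ large enough that $d_N\,a(h_F)>\max\{h_F(s): s\in S\}$, any $x$ with $f_N(x)\in S$ must satisfy $h_F(x)=0$, since otherwise $h_F(f_N(x))=d_N h_F(x)\geqslant d_N a(h_F)$ would exceed the heights occurring in $S$ (Lemma~\ref{lem:height:zero}). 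This is why the paper's unconditional results require finiteness of $Y(K)$ itself, or the additive/multiplicative structure to relate $Y_N$ to a fixed finite $Y_{N_1}(K)$, and why only a density-one statement is obtained in the multiplicative case. Your observation that the general conjecture needs genuinely new arithmetic-geometric input is correct, but the proposal as written is a research programme, not a proof, and its intermediate steps do not reduce to anything the paper establishes.
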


\begin{theorem}\label{thm:main}
\begin{enumerate}[label=\rm (\arabic*)]
\item If $Y(K)$ is finite, then there exists $N_2$ such that
$Y_N$ has Fermat's property for any $N \geqslant N_2$.

\item
If $F$ is additive and there is $N_1 \geqslant N_0$ such that $Y_{N_1}(K)$ is finite, 
then there exists $N_2 \geqslant N_0$ such that
$Y_N$ has Fermat's property for any $N \geqslant N_2$. Namely,
Conjecture~\ref{conj:gen:Fermat:conj} holds for an additive system of endomorphisms polarized by $L$.

\item
If $F$ is multiplicative and there is a positive integer $p_0$ such that $Y_p(K)$ is finite for any prime $p$ with $p \geqslant p_0$,
then
\[
\lim_{m\to\infty} \frac{ \# \{ N_0 \leqslant N \leqslant m \mid \text{$F_N$ has Fermat's property over $K$}\} }{m} = 1,
\]
that is, Fermat's property holds with probability one.
\end{enumerate}
\end{theorem}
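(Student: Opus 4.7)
\medskip

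\noindent The overall strategy is to prove (1) by a Northcott-type dichotomy on heights, and then to reduce (2) and (3) to (1) by exploiting, respectively, the additive and multiplicative structure of $F$. For (1), I would write $Y(K)=\{y_1,\dots,y_k\}$, set $H:=\max_i h_F(y_i)$, and observe that for every $x\in Y_N(K)$ the identity $d_N h_F(x)=h_F(f_N(x))$ combined with $f_N(x)\in Y(K)$ forces $h_F(x)\leqslant H/d_N\leqslant H$. By Northcott's property, the set $Z:=\{x\in X(K):h_F(x)\leqslant H\}$ is finite and contains every $Y_N(K)$. If every $z\in Z$ satisfies $h_F(z)=0$ we are done; otherwise $h:=\min\{h_F(z):z\in Z,\ h_F(z)>0\}$ is strictly positive, and any $x\in Y_N(K)$ with $h_F(x)>0$ would give $h\leqslant h_F(x)\leqslant H/d_N$, which fails once $d_N>H/h$. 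Since $d_N\to\infty$ the desired $N_2$ exists.

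For (2), additivity lets me factor $f_N=f_{N_1}\circ f_{N-N_1}$ for $N\geqslant N_1+N_0$, so any $x\in Y_N(K)$ satisfies $f_{N-N_1}(x)\in Y_{N_1}(K)$, which is finite by hypothesis. Setting $H':=\max\{h_F(z):z\in Y_{N_1}(K)\}$ and using $h_F(x)\leqslant H'/d_{N-N_1}$, I repeat the Northcott argument of (1) with $Y_{N_1}$ in place of $Y$ and $d_{N-N_1}\to\infty$ to conclude.

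For (3), multiplicativity gives the key identity $Y_{pM}=f_M^{-1}(Y_p)$ whenever $p,M\geqslant N_0$. Since $Y_p(K)$ is finite for every prime $p\geqslant p_0$, I would apply (1) with $Y_p$ playing the role of $Y$ (it is equidimensional because $f_p$ is finite and surjective, $f_p^*L$ being ample) to obtain, for each such $p$, an integer $M_p$ with the property that $Y_{pM}$ has Fermat's property for all $M\geqslant M_p$. To upgrade this to density $1$, fix $\varepsilon>0$ and use $\sum 1/p=\infty$ to choose primes $p_1<\cdots<p_k$ (all $\geqslant\max(p_0,N_0)$) with $\prod_{i=1}^{k}(1-1/p_i)<\varepsilon/2$. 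Setting $B:=\max_i p_i M_{p_i}$, every $N\in[N_0,m]$ lacking Fermat's property must either lie in $[N_0,B]$ or be coprime to $p_1\cdots p_k$; the latter set has density $\prod(1-1/p_i)+o(1)<\varepsilon$, so the fraction of bad $N$ is below $\varepsilon$ for $m$ large, and letting $\varepsilon\to 0$ proves the claim.

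The main conceptual step is the reduction in (3): recognizing that multiplicativity converts the hypothesis on \emph{prime} indices into a statement about all multiples $pM$ via (1) applied to $Y_p$, after which an elementary Eratosthenes-type sieve on integers with no small prime factor pushes Fermat's property up to density one. Everything else rests on the single mechanism of a Northcott finiteness bound combined with the geometric growth $d_N\to\infty$.
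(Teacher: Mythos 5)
Your proposal is correct and follows essentially the same route as the paper: part (1) is the Northcott dichotomy (the paper packages your ``$h>0$'' threshold as the global quantity $a(h_F)=\inf\{h_F(x)\mid h_F(x)>0\}$ in Lemma~\ref{lem:height:zero}, but the mechanism $h_F(x)\leqslant H/d_N$ versus a positive lower bound is identical), part (2) is the same reduction via $Y_{N}=f_{N-N_1}^{-1}(Y_{N_1})$, and part (3) is the same reduction to multiples of large primes followed by the Eratosthenes-type sieve that the paper outsources to \cite[Lemma~5.16]{IKMFaltings} (and reproduces in Lemma~\ref{lemma:proba:1:mult:index}). No gaps; the only difference is that you inline the density computation rather than citing it.
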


\section{Adelic structure of field}\label{sec:adelic:structure}
In this section, we quickly review several notions related to an adelic structure of a field.
For details, see the manuscripts \cite{MR4292529, CMIntersection, Chen_Moriwaki24} due to H. Chen and A. Moriwaki.

An \emph{adelic structure} of a field $K$ consists of data
$S = ((\Omega, \mathcal A, \nu), \{ |\ndot|_\omega\}_{\omega \in \Omega})$ satisfying the following properties:
\begin{enumerate}[label=\rm (\arabic*)]
\item $(\Omega, \mathcal A, \nu)$ is a measure space, that is,
$\mathcal A$ is a $\sigma$-algebra of $\Omega$ and $\nu$ is a measure on the measurable space $(\Omega, \mathcal A)$.

\item 
$\{ |\ndot|_\omega\}_{\omega \in \Omega}$ is a collection of absolute values of $K$ indexed by $\Omega$.

\item For $a \in K^{\times}$, the function $(\omega \in \Omega) \mapsto \log |a|_{\omega}$ is $\nu$-integrable.
\end{enumerate}
For $\omega \in \Omega$, the completion of $K$ with respect to $|\ndot|_{\omega}$ is denoted by $K_{\omega}$.
The adelic structure $S$ is said to be \emph{proper} if
\begin{equation}\label{eqn:product:formula}
\int_{\Omega} \log |a|_{\omega} \nu(\mathrm{d}\omega) = 0
\end{equation}
holds for all $a \in K^{\times}$. The equation \eqref{eqn:product:formula} is
called the \emph{product formula}. From now on, we assume that $S$ is proper.

Let $V$ be a one-dimensional vector space over $K$. For each $\omega \in \Omega$, let
$\|\ndot\|_{\omega}$ be a norm of $V_\omega := V \otimes_K K_\omega$ over $K_{\omega}$.
The collection $\|\ndot\| = \{ \|\ndot\|_{\omega} \}_{\omega \in \Omega}$ is said to be \emph{adelic} if,
for each $s \in V \setminus \{ 0 \}$, the function $\Omega \to \RR$ given by $\omega \mapsto \log \| s \|_\omega$ is
$\nu$-integrable. If $\|\ndot\|$ is adelic, then
\[
\int_{\Omega} \log \|s \|_\omega \nu(d\omega)
\]
does not depend on the choice of $s \in V \setminus \{ 0 \}$ because of the product formula, so
it is called the \emph{Arakelov degree} of $(V, \|\ndot\|)$, and is denoted by $\adeg(V,\|\ndot\|)$.

Let $X$ be a geometric integral projective scheme over $K$, $X_\omega := X \times_{\Spec K} \Spec K_{\omega}$ for $\omega \in \Omega$, and 
$X_{\omega}^{\mathrm{an}}$ be the analytification in the sense of Berkovich \cite{MR1070709}. Let $L$ be an invertible sheaf on $X$ and $L_{\omega}^{\mathrm{an}}$ be the pull-back of $L$ via
the natural map $X_{\omega}^{\mathrm{an}} \to X$. For each $\omega \in \Omega$, let $\varphi_\omega$ be a continuous metric of $L_\omega^{\mathrm{an}}$.
The collection $\varphi = \{ \varphi_\omega \}_{\omega \in \Omega}$ is said to be \emph{adelic} if it satisfies the conditions described in \cite[Chapter~6]{MR4292529}.
The exact definition of the adelic collection is complicated in a few words, but note that, if $\varphi$ is adelic, then,
for $x \in X(K)$ (i.e., a morphism $x : \Spec(K) \to X$ over $K$), the pullback $x^*(L, \varphi)$ yields a one-dimensional vector space over $K$ with
an adelic norm family. Thus  the \emph{height} of $x$ with respect to $(L, \varphi)$ is defined by
$\adeg(x^*(L, \varphi))$ which is denoted by $h_{(L,\varphi)}(x)$.

We say $S$ has \emph{Northcott's property} if, for any $C \in \RR$,
the set 
\[
\left\{ a \in K \ \left|\ \int_\Omega \log \max \{ |a|_{\omega}, 1 \}  \nu(d\omega) \leqslant C \right\}\right.
\]
is finite.
Note that, by \cite[Proposition~6.2.3]{MR4292529}, if $S$ has Northcott's property and $L$ is ample, then
the set $\{ x \in X(K) \mid h_{(L,\varphi)}(x) \leqslant C \}$ is finite for all $C \in \RR$.
Moreover, if $K$ is an arithmetic function field, then, by \cite[Theorem~2.7.18]{CMIntersection}, $K$ has a proper adelic structure with Northcott's property and
$\nu(\mathcal A) \not\subseteq \{ 0, 1 \}$.
Finally note that once we have a proper adelic structure with Northcott's property, the theory of height functions works similarly to the case of number fields.

\section{Endomorphism polarized by an ample line bundle}
Let $f : X \to X$ be an endomorphism over $K$.
Let $L$ be an ample line bundle on $X$.
We say $f$ is \emph{polarized by $L$} if there is an isomorphism $\alpha : f^*(L) \simeq L^{\otimes d(f)}$ for some positive integer $d(f)$.

\begin{proposition}\label{prop:fund:prop:dynamics}
\begin{enumerate}[label=\rm (\arabic*)]
\item $d(f)^{\dim X} = \deg(f)$.
\end{enumerate}
From now on, we assume that $d(f)$ is greater than or equal to $2$.
\begin{enumerate}[label=\rm (\arabic*)]
\setcounter{enumi}{1}
\item There exists the unique metric family $\varphi_{f,\alpha} = \{ \varphi_{f,\alpha, \omega} \}_{\omega \in \Omega}$ of $L$, called the \emph{canonical global compactification} of $L$,
such that, for each $\omega \in \Omega$, the isomorphism $\alpha_\omega : f_\omega^*(L_\omega) \simeq L_\omega^{\otimes d(f)}$ extends to the isometry
$f_\omega^*(L_{\omega}, \varphi_{f,\alpha, \omega}) \simeq (L_{\omega}, \varphi_{f,\alpha, \omega})^{\otimes d(f)}$.
Moreover, $(L, \varphi_{f,\alpha})$ is a nef  adelic line bundle.
In particular, $h_{f}(x) \geqslant 0$ for any $x \in X(K)$. 
\item The height function $h_{\varphi_{f,\alpha}}$ with respect to $\varphi_{f, \alpha}$ does not depend on the choice of the isomorphism $\alpha$, so that $h_{\varphi_{f,\alpha}}$ is denoted by $h_{f}$.
\item For any $x \in X(K)$, $h_f(f(x)) = d(f) h_f(x)$.
\item Let $g$ be another endomorphism of $X$ such that $g$ is polarized by $L$ and $d(g) \geqslant 2$.
If $f \circ g = g \circ f$, then $h_f = h_g$. 
Moreover, $f \circ g$ is polarized by $L$, and $d(f \circ g) = d(f) d(g)$.
\item If we set
$a(h_f) := \inf \{ h_{f}(x) \mid \text{$x \in X(K)$ and $h_{f}(x) > 0$} \}$,
then $a(h_f) > 0$.
\end{enumerate}
\end{proposition}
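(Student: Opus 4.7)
My plan is to treat the six parts in sequence, with part (2) containing the bulk of the technical work; the other parts reduce to formal consequences of (2) together with Northcott's property. For part (1), intersection theory on the ample $L$ gives $L^{\dim X}>0$, and by the projection formula
\[
\deg(f)\cdot L^{\dim X} = (f^*L)^{\dim X} = d(f)^{\dim X} \cdot L^{\dim X},
\]
yielding $d(f)^{\dim X} = \deg(f)$.

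For part (2), I would carry out the Tate-limit construction in the adelic framework of \cite{MR4292529}. Starting from any continuous model metric family $\varphi_0$ on $L$, iterate $\varphi_{k+1,\omega} := d(f)^{-1} f_\omega^*(\varphi_{k,\omega})$, where the pullback is interpreted via $\alpha_\omega$. The contraction factor $d(f)^{-1}\leqslant 1/2$ makes the sequence Cauchy with geometric rate in the sup distance at each place $\omega$. The main obstacle is to verify that the pointwise limit $\varphi_{f,\alpha}$ is a genuine adelic metric family in the sense of \cite{MR4292529}: one needs measurability in $\omega$ and integrability of $\dist(\varphi_{k,\omega},\varphi_{0,\omega})$ against $\nu$, both inherited from $\varphi_0$ via the geometric-series bound. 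Nefness is preserved by starting from a nef $\varphi_0$ (arising from a nef integral model) and using that pullbacks and uniform limits of nef metrics remain nef.

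Parts (3), (4), (5) are formal consequences of (2). For (3), two isomorphisms $\alpha,\alpha'$ differ by multiplication by some $c \in K^\times$; the Tate iteration contracts the resulting discrepancy by $d(f)^{-k}$ at each place, and the remaining global shift vanishes at the level of heights because the product formula is encoded in $\nu$. Part (4) is immediate from the defining isometry $f^*(L,\varphi_f)\simeq (L,\varphi_f)^{\otimes d(f)}$. For (5), the classical uniqueness argument applies: both $h_f$ and $h_g$ agree with any fixed height associated to $L$ up to $O(1)$, and the commutation $f\circ g = g\circ f$ together with $f^*L\simeq L^{\otimes d(f)}$ forces $h_g \circ f - d(f)h_g$ to be bounded, whence Tate iteration yields $h_g = h_f$; the compatibility of $f\circ g$ with $d(f\circ g)=d(f)d(g)$ is direct from $(f\circ g)^*L \simeq L^{\otimes d(f)d(g)}$.

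Part (6) is an immediate consequence of Northcott's property, which is part of the standing hypothesis on $S$. Since $h_f \geqslant 0$ by (2), the set $\{x \in X(K) \mid h_f(x) \leqslant 1\}$ is finite, so either every such point satisfies $h_f(x)=0$, giving $a(h_f) \geqslant 1$, or the finitely many strictly positive values have a positive minimum equal to $a(h_f)$. In summary, the entire proof is essentially the Call--Silverman canonical-height machinery transported into Moriwaki's adelic setting; the only non-formal step is the measurability/integrability verification in (2), and this is where I expect the bulk of the care to lie.
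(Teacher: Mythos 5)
Your overall strategy matches the paper's: parts (1), (4), and (6) are proved exactly as in the text (projection formula; the defining isometry; Northcott via \cite[Proposition~6.2.3]{MR4292529}), and for (2) the paper simply cites \cite[Proposition~2.5.11~(4)]{MR4292529} and \cite[Proposition~9.3.7]{Chen_Moriwaki24} for what you sketch as the Tate-limit construction, correctly identifying measurability/integrability over $\Omega$ as the real content. Where you genuinely diverge is in (3) and (5): the paper works at the level of the canonical metrics themselves, writing $\beta = c\alpha$ (resp.\ $\beta^{\otimes d(f)}\circ g^*(\alpha) = r\cdot\alpha^{\otimes d(g)}\circ f^*(\beta)$ for some $r\in K^\times$) and invoking the explicit scaling formulas of \cite{Chen_Moriwaki24} to conclude that the two canonical metrics differ place-by-place by $|c|_\omega$ or $|r|_\omega$ raised to a fixed power, after which the product formula kills the discrepancy in the height. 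This buys exact, uniform control at every $\omega$ with no boundedness arguments. Your route for (5) is the classical Call--Silverman $O(1)$ argument at the level of height functions; it is viable, but as written it has a gap. From $h_g\circ f - d(f)h_g$ bounded, the Tate iteration $\tilde h(x):=\lim_k h_g(f^k(x))/d(f)^k$ only yields $\tilde h = h_g + O(1)$ (and $\tilde h = h_f$ by uniqueness for $f$), i.e.\ $h_f = h_g + O(1)$, not equality. The commutation $f\circ g = g\circ f$ is not where you place it: it is needed afterwards, to compute $\tilde h(g(x)) = \lim_k h_g(g(f^k(x)))/d(f)^k = d(g)\tilde h(x)$, so that $\tilde h$ satisfies the exact functional equation for $g$ and hence equals $h_g$ by uniqueness of the canonical height for $g$. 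You should also note that in the adelic-curve setting the $O(1)$ constants must be controlled uniformly (integrably) over $\Omega$, which is precisely the headache the paper's metric-level argument avoids.
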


\begin{proof}
(1) Indeed,
\[
d(f)^{\dim X} (L^{\dim X}) = \Big( (L^{\otimes d(f)})^{\dim X} \Big) =( f^*(L)^{\dim X}) = \deg(f) (L^{\dim X}),
\]
as required.

(2) For the existence of $\varphi_{f,\alpha}$, we refer to \cite[Proposition~2.5.11 (4)]{MR4292529}. The second assertion follows from \cite[Proposition~9.3.7]{Chen_Moriwaki24}.

(3) Let $\beta$ be another isomorphism $\beta : f^*(L) \simeq L^{\otimes d(f)}$. Then we can find $c \in K^{\times}$ such that $\beta = c \alpha$.
Thus, by \cite[Proposition~9.3.3]{Chen_Moriwaki24}, for each $\omega \in \Omega$, \[|\ndot|_{\varphi_{f, \beta,\omega}} = |c|_{\omega}^{-1/(d(f)-1)}|\ndot|_{\varphi_{f,\alpha,\omega}}.\]
Therefore we obtain (3).

(4) Indeed,
\begin{align*}
h_f(f(x)) & = h_{(L, \varphi_{f,\alpha})}(f(x)) = h_{f^*(L, \varphi_{f,\alpha})}(x) = h_{(L, \varphi_{f,\alpha})^{\otimes d(f)}}(x) \\
& = d(f)h_{(L, \varphi_{f,\alpha})}(x) = d(f)h_f(x).
\end{align*}

(5) Fix isomorphisms $\alpha : f^*(L) \simeq L^{\otimes d(f)}$ and $\beta : g^*(L) \simeq L^{\otimes d(g)}$.
Let us consider the following homomorphisms:
\[
\begin{cases}
\begin{CD} g^*(f^*(L)) @>{\sim}>{g^*(\alpha)} > g^*(L^{\otimes d(f)}) @>{\sim}>{\beta^{\otimes d(f)}}> L^{\otimes d(f)d(g)} \end{CD}, \\[2ex]
\begin{CD} g^*(f^*(L)) = f^*(g^*(L)) @>{\sim}>{f^*(\beta)}> f^*(L^{\otimes d(g)})@>{\sim}>{\alpha^{\otimes d(g)}}>  L^{\otimes d(f)d(g)} \end{CD}.
\end{cases}
\]
Thus we can find $r \in K^{\times}$ such that $\beta^{\otimes d(f)} \circ g^*(\alpha) = r \cdot \alpha^{\otimes d(g)} \circ f^*(\beta)$.
Then, by \cite[Proposition~9.3.4]{Chen_Moriwaki24},
one has $|\ndot|_{\varphi_{g, \beta, \omega}} = |r|_\omega^{-1/(d(f)-1)(d(g)-1)}|\ndot|_{\varphi_{f, \alpha, \omega}}$.
Thus the first assertion follows. 
Moreover, by using the above isomorphisms, the second assertion is obtained.

(6) 
Note that, for any $C \in \RR$,
the set $\{ x \in X(K) \mid h_{f}(x) \leqslant C \}$ is finite.
Thus (6) follows.
\end{proof}

The following lemma is a key of this paper.

\begin{lemma}\label{lem:height:zero}
Let $S$ be a finite subset of $X(K)$. Let $f : X \to X$  and $g : X \to X$ be endomorphisms of $X$ such that $f$ and $g$ are polarized by $L$, $d(f), d(g) \in \ZZ_{\geqslant 2}$, and $f \circ g = g \circ f$.
If $d(g) a(h_f) > \max \{ h_{f}(s) \mid s \in S \}$ and $g(x) \in S$ for $x \in X(K)$,
then $h_{f}(x) = 0$.
\end{lemma}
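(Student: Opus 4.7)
The plan is to combine the commutativity hypothesis with the canonical height identities from Proposition~\ref{prop:fund:prop:dynamics} to force $h_f(x)$ to be strictly below the gap constant $a(h_f)$, which then forces it to vanish.

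First, since $f$ and $g$ are both compatible with $L$, have $d(f), d(g) \geqslant 2$, and commute, Proposition~\ref{prop:fund:prop:dynamics}(5) gives the identity $h_f = h_g$ as functions on $X(K)$. Next, applying the functional equation of Proposition~\ref{prop:fund:prop:dynamics}(4) to $g$ in place of $f$, I get $h_g(g(x)) = d(g)\, h_g(x)$ for every $x \in X(K)$; combined with $h_f = h_g$, this reads
\[
h_f(g(x)) = d(g)\, h_f(x).
\]

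Now I specialize to our $x$. Because $g(x) \in S$, I have
\[
d(g)\, h_f(x) = h_f(g(x)) \leqslant \max\{ h_f(s) \mid s \in S \} < d(g)\, a(h_f),
\]
where the strict inequality is the standing hypothesis. Dividing by $d(g) > 0$ yields $h_f(x) < a(h_f)$. On the other hand, $h_f(x) \geqslant 0$ by Proposition~\ref{prop:fund:prop:dynamics}(2), and by the very definition of $a(h_f)$ any positive value of $h_f$ must be at least $a(h_f)$. Hence $h_f(x) = 0$, as claimed.

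There is no real obstacle here beyond noticing that the commutativity is exactly what is needed to invoke Proposition~\ref{prop:fund:prop:dynamics}(5) so that the iteration by $g$ can be measured by the height $h_f$; the finiteness of $S$ enters only to guarantee that the maximum in the hypothesis is attained and finite, which is what lets the gap constant $a(h_f)$ do its work.
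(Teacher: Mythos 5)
Your proof is correct and is essentially the paper's argument: both use Proposition~\ref{prop:fund:prop:dynamics}(5) to get $h_f=h_g$, then the functional equation (4) for $g$ together with $g(x)\in S$ to bound $d(g)h_f(x)$ by $\max\{h_f(s)\mid s\in S\}<d(g)a(h_f)$. The only difference is that the paper phrases it as a contradiction from assuming $h_f(x)>0$, while you argue directly that $h_f(x)<a(h_f)$ forces $h_f(x)=0$; these are the same proof.
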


\begin{proof}
If we assume that $h_{f}(x) > 0$, then, by (4) and (5) in Proposition~\ref{prop:fund:prop:dynamics},
\[
\max \{ h_{f}(x) \mid x \in S \} \geqslant h_{f}(g(x)) = h_g(g(x)) = d(g) h_{g}(x) = d(g) h_{f}(x) \geqslant d(g) a(h_f),
\]
which is a contradiction.
\end{proof}

\section{Generalized Fermat's conjecture}

Let $F = \{ f_N \}_{N=N_0}^{\infty}$ be a system of endomorphisms polarized by $L$.

\begin{proposition}\label{prop:sys:endo:property}
\begin{enumerate}[label=\rm (\arabic*)]
\item The height function $h_{f_N}$ ($N \geqslant N_0)$ does not depend on $N$, so they are denoted by $h_F$. 
\item For $N \geqslant N_0$, $h_F(f_N(x)) = d_N h_F(x)$.
\end{enumerate}
\end{proposition}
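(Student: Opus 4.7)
The plan is to derive both assertions as immediate consequences of Proposition~\ref{prop:fund:prop:dynamics}, using the commutativity condition in the definition of a system of endomorphisms compatible with $L$.

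For (1), I would fix $N, N' \geqslant N_0$ and observe that the endomorphisms $f_N$ and $f_{N'}$ satisfy the hypotheses of Proposition~\ref{prop:fund:prop:dynamics}(5): both are compatible with $L$, each $d_N, d_{N'} \geqslant 2$ by condition~(1) in the definition of a system of endomorphisms, and they commute by condition~(3). Therefore Proposition~\ref{prop:fund:prop:dynamics}(5) yields $h_{f_N} = h_{f_{N'}}$. Since this holds for arbitrary $N, N' \geqslant N_0$, the common value can unambiguously be denoted by $h_F$.

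For (2), once (1) is in hand this is immediate: by Proposition~\ref{prop:fund:prop:dynamics}(4) applied to $f = f_N$, we have $h_{f_N}(f_N(x)) = d_N h_{f_N}(x)$ for any $x \in X(K)$, which upon replacing $h_{f_N}$ by $h_F$ gives the desired identity.

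There is essentially no obstacle here; the proposition is a packaging of Proposition~\ref{prop:fund:prop:dynamics}(4) and (5) in the language of systems, and the role of axioms (1) and (3) in the definition of a system of endomorphisms is precisely to make that packaging possible. Condition~(2) on $\lim_{N\to\infty}\deg(f_N)=\infty$ is not needed for this proposition; it will matter later when controlling finiteness of fibers.
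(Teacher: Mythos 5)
Your proof is correct and matches the paper's argument exactly: assertion (1) is obtained by applying Proposition~\ref{prop:fund:prop:dynamics}(5) to the commuting pair $f_N$, $f_{N'}$, and assertion (2) follows from Proposition~\ref{prop:fund:prop:dynamics}(4) together with (1). Your added remark that condition (2) in the definition of a system is not needed here is also accurate.
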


\begin{proof}
(1) follows from (5) in Proposition~\ref{prop:fund:prop:dynamics}.

(2) 
By (4) in Proposition~\ref{prop:fund:prop:dynamics},
\[
h_F(f_N(x)) = h_{f_N}(f_N(x)) = d_N h_{f_N}(x) = d_N h_{F}(x),
\]
as required.
\end{proof}

The following are examples of systems of endomorphisms polarized by $L$.

\begin{example}\label{example:projective:space}
Let $X = \PP^n_K$, $L = \OO_{\PP^n}(1)$ and $f_N(x_0 : \cdots : x_n) = (x_0^N : \cdots : x_n^N)$ ($N \geqslant 2$).
Then $f_N^*(L) \simeq L^{\otimes N}$ and $f_N \circ f_{N'} = f_{N} \circ f_{N'} = f_{NN'}$. Note that $d_N = N$ and $\{ f_N \}_{N=2}^{\infty}$ is
multiplicative.
In this case, $h_F(x_0 : \cdots : x_n) = 0$ if and only if
there is $t \in K^{\times}$ such that $tx_0, \ldots, tx_n \in \{ 0 \} \cup \mu(K)$, where $\mu(K)$ is the set of all roots of the unity in $K$.
\end{example}

\begin{example}\label{example:abelian:variety}
Let $X$ be an abelian variety over $K$, $L$ be an even ample line bundle on $X$, and $f_N(x) = Nx$ ($N \geqslant 2$).
Then $f_N^*(L) \simeq L^{\otimes N^2}$ and $f_N \circ f_{N'} = f_{N} \circ f_{N'} = f_{NN'}$. Note that $d_N = N^2$ and $\{ f_N \}_{N=2}^{\infty}$ is
multiplicative.
In this case, $h_F(x) = 0$ if and only if
$x$ is a torsion point.
\end{example}

\begin{example}\label{example:latte:map}
Here we consider a family of Latt\`es maps. Let $E$ be an elliptic curve over $K$ and $\pi : E \to \PP^1$ be the morphism such that
$\deg(\pi) = 2$ and $\pi(P) = \pi(-P)$, that is, $\PP^1 = E/\{ \pm 1 \}$.
For an integer $N \geqslant 2$, let $f_N : \PP^1 \to \PP^1$ be the morphism such that the following diagram is commutative:
\[\begin{CD}
E @>[N]>> E \\
@V{\pi}VV @VV{\pi}V \\
\PP^1 @>{f_N}>> \PP^1
\end{CD}\]
Then $f_N^*(\OO_{\PP^1}(1)) = \OO_{\PP^1}(N^2)$ and $f_N \circ f_{N'} = f_{N'} \circ f_{N}  = f_{NN'}$.
Note that $d_N = N^2$ and $\{ f_N \}_{N=2}^{\infty}$ is
multiplicative.
\end{example}

\begin{example}\label{example:Chebyshev:map}
Let $T_N(z)$ be the Chebyshev polynomial of degee $N$, which is given by the recursive relation
$T_{N}(z) = z T_{N-1}(z) - T_{N-2}(z)$ ($T_0 = 2$, $T_1 = z$ and $T_2= z^2 - 2$). It can be characterized by $T_N(z+z^{-1}) = z^{N} + z^{-N}$, so we can see that $T_{N}(T_{N'}(z)) = T_{N'}(T_{N}(z)) = T_{NN'}(z)$.
Thus $\{ T_{N} \}_{N=2}^{\infty}$ yields a multiplicative system of endomorphisms.
Note that $(1/2)T_N(2z)$ is the classical Chebyshev polynomial of degree $N$.
For details, see \cite[Section~6.2]{silverman2007arithmetic}.
\end{example}

\begin{example}\label{example:dynamical:system}
Let $f$ be an endomorphism of $X$ such that $f$ is polarized by $L$ and $d(f) \geqslant 2$. We set $f_N = f^{N}$ ($N \geqslant 1$).
Then $(f_N)^* (L) \simeq L^{\otimes d(f)^N}$ and $f_N \circ f_{N'} = f_{N} \circ f_{N'} = f_{N+N'}$. Note that $d_N = d(f)^N$ and $\{ f_N \}_{N=1}^{\infty}$ is additive.
In this case, $h_F(x) = 0$ if and only if
$x$ is preperiodic with respect to $f$.
It seems that Theorem~\ref{thm:main} is slightly related to the problem posed in \cite{BMSCancellation}.
We assume that $f(Y) \subseteq Y$. 
Then we have the tower
\[
Y(K) \subseteq Y_{1}(K) \subseteq \cdots \subseteq Y_{N}(K) \subseteq \cdots
\]
If $Y(K)$ is finite, 
by (1) in Theorem~\ref{thm:main}, 
there is $N_2$ such that \[Y_N(K) \subseteq \{ x \in X(K) \mid h_F(x) =0 \}\] for all $N \geqslant N_2$, 
so the tower is stabilized because
$\{ x \in X(K) \mid h_F(x) = 0\}$ is finite.
\end{example}

\begin{remark}
The above Example~\ref{example:projective:space}, Example~\ref{example:abelian:variety}, Example~\ref{example:latte:map} and
Example~\ref{example:Chebyshev:map} are exceptional from the dynamics point of view, but Example~\ref{example:dynamical:system} is the general iterative
setting. In this sense, the actual analogue of Fermat's conjecture is very restricted in the dynamical systems.
\end{remark}

\begin{remark}\label{rem:why:Fermat:prop}
In Example~\ref{example:projective:space}, we assume that 
$K = \QQ$, $n = 2$ and $Y$ is defined by $X_0 + X_1 - X_2 = 0$. Note that
$Y_N$ is given by $X_0^N + X_1^N - X_2^N$. If $Y_N$ has Fermat's property, then, for  $(x_0 : x_1 : x_n) \in Y_N(K)$, we can find $t \in \QQ^{\times}$ such that $tx_0, tx_1, tx_2 \in \{ 0, 1, -1 \}$.
Thus we can easily see that $x_0 x_1 x_2 = 0$, and hence Fermat's conjecture for $N$ follows.
\end{remark}

\begin{remark}
If $\dim Y = 1$, then $\dim Y_N = 1$ because $f_N$ is finite.
Thus if there is $N_0$ such that every irreducible component of $Y_N$ has the genus $\geqslant 2$ for any integer $N \geqslant N_0$,
then, by Faltings theorem (c.f. \cite{MR1307396}), $Y_N(K)$ is finite for $N \geqslant N_0$. 
\end{remark}

\begin{remark}
If $\{ f_N \}_{N=N_0}^{\infty}$ is a multiplicative system of endomorphisms polarized by $L$,
then, for a fixed integer $m \geqslant 2$, $\{ g_N \}_{N=N_1}^{\infty}$ given by $g_N = f_{m^N}$ is additive,
where $N_1$ is an integer with $m^{N_1} \geqslant N_0$.
\end{remark}

\begin{remark}
Theorem~\ref{thm:main} was previously discussed in the following cases:
\begin{enumerate}[label=\rm (\arabic*)]
\item $K = \QQ$, $X = \PP^2$, $f_N(x_0 : x_1 : x_2) = (x_0^N : x_1^N : x_2^N)$ and $Y$ is given by $X_0 + X_1 - X_2$
(M. Filaseta \cite{MR734070}, A. Granville \cite{MR777765}, D. R. Heath-Brown \cite{MR766439}).

\item  $K$ is a number field, $X = \PP^2$, $f_N(x_0 : x_1 : x_2) = (x_0^N : x_1^N : x_2^N)$ and $Y$ is given by $X_0 + X_1 - X_2$ (H. Ikoma, S. Kawaguchi and  A. Moriwaki \cite{IKMFaltings}).

\item $K$ is an arithmetic function field, $X = \PP^2$, $f_N(x_0 : x_1 : x_2) = (x_0^N : x_1^N : x_2^N)$ and $Y$ is a strongly non-degenerated curve, that is, $Y_N$
is smooth over $K$ for all $N$ (H. Chen and A. Moriwaki \cite{CMIntersection}).
\end{enumerate}
\end{remark}

\begin{proof}[Proof of Theorem~\ref{thm:main}]
Let $S$ be a finite subset of $X(K)$.
As $\lim_{m\to\infty} d_m = \infty$ by (1) of Proposition~\ref{prop:fund:prop:dynamics}, 
there is $m_0$ such that
\[d_m a(h_F) > \max \{ h_{F}(y) \mid y \in S \}\] for all $m \in \ZZ_{\geqslant m_0}$.

(1) We set $S = Y(K)$. Then, for $N \in \ZZ_{\geqslant m_0}$, as $f_N(Y_N(K)) \subseteq S$ and $f_N \circ f_{N_0} = f_{N_0} \circ f_N$,
by Lemma~\ref{lem:height:zero}, $Y_N(K) \subseteq \{ x \in X(K) \mid h_F(x) = 0 \}$.

(2) Note that $f^{-1}_N(Y_{N_1}) = Y_{N+N_1}$. Thus the assertion follows from (1).

(3) For any prime number $p \geqslant p_0$, we set $S = Y_p(K)$.
If $x \in Y_{mp}(K)$, 
then one has $f_m(x) \in Y_p(K)$ because $f_p(f_m(x)) = f_{mp}(x) \in Y(K)$.
Moreover, $f_m \circ f_{N_0} = f_{N_0} \circ f_m$.
Thus, by Lemma~\ref{lem:height:zero}, $Y_{mp}(K) \subseteq \{ x \in X(K) \mid h_F(x) = 0 \}$ for $m \in \ZZ_{\geqslant m_0}$.
Therefore, by \cite[Lemma~5.16]{IKMFaltings} or Lemma~\ref{lemma:proba:1:mult:index}, we have the assertion.
\end{proof}

\section{Product case (multi-indexed version)}
Let $X_1, \ldots, X_n$ be geometrically integral projective schemes over $K$ 
and $L_1, \ldots, L_n$ be ample line bundles on $X_1, \ldots, X_n$, respectively.
Let $f_1, \ldots, f_n$ be endomorphisms of $X_1, \ldots, X_n$, respectively.
We assume that, for each $i \in \{1, \ldots, n\}$,  $f_i^*(L_i) \simeq L_i^{\otimes d_i}$ for some integer $d_i \geqslant 2$.
Let $\varphi_i$ be the global compatification of $f_i^*(L_i) \simeq L_i^{d_i}$ for each $i$.
Let $X = X_1 \times \cdots \times X_n$, $L = p_1^*(L_1) \otimes \cdots \otimes p_n^*(L_n)$ and $f = f_1 \times \cdots \times f_n$, that is,
$f(x_1, \ldots, x_n) = (f_1(x_1), \ldots, f_n(x_n))$ for $(x_1, \ldots, x_n) \in X$. Moreover, let $p_i : X \to X_i$ be the projection to the $i$-th factor.
Note that $f^*(L) \simeq p_1^*(L_1)^{\otimes d_1} \otimes \cdots \otimes p_n^*(L_n)^{\otimes d_n}$, and
$\varphi = p_1^*(\varphi_1) \otimes \cdots \otimes p_n^*(\varphi_n)$ yields the global compatification of
$f^*(L) \simeq p_1^*(L_1)^{\otimes d_1} \otimes \cdots \otimes p_n^*(L_n)^{\otimes d_n}$, that is,
we have the isometry \[f_{\omega}^*(L_{\omega}, \varphi_\omega) \simeq p_{1,\omega}^*(L_{1,\omega}, \varphi_{1, \omega})^{\otimes d_1} \otimes \cdots \otimes p_{n,\omega}^*(L_{n,\omega}, \varphi_{n, \omega})^{\otimes d_n}\]
for each $\omega \in \Omega$.

\begin{proposition}
For $x_1 \in X_1(K), \ldots, x_n \in X_n(K)$,
\[\begin{cases}
h_{f} (x_1, \ldots, x_n) = h_{f_1}(x_1) + \cdots + h_{f_n}(x_n),\\
h_{f} (f(x_1, \ldots, x_n)) = d_1h_{f_1}(x_1) + \cdots + d_nh_{f_n}(x_n).
\end{cases}
\]
\end{proposition}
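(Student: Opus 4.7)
The plan is to reduce both equalities to two structural features of the construction already set up: uniqueness of the canonical global compactification (Proposition~\ref{prop:fund:prop:dynamics}(2)), and the standard additivity and pullback-functoriality of height functions with respect to tensor products and inverse images.

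First I would observe that the metric family $\varphi = p_1^*(\varphi_1) \otimes \cdots \otimes p_n^*(\varphi_n)$ has been shown, in the paragraph preceding the statement, to satisfy the defining isometry
\[
f_\omega^*(L_\omega, \varphi_\omega) \simeq \bigotimes_{i=1}^n p_{i,\omega}^*(L_{i,\omega}, \varphi_{i,\omega})^{\otimes d_i}
\]
for every $\omega \in \Omega$. Since the right-hand side is canonically isomorphic to $(L_\omega, \varphi_\omega)^{\otimes d}$ where $d = \max_i d_i$ \emph{only after noting the factorwise grading}, the correct way to phrase this is that $\varphi$ realises the isomorphism $\alpha : f^*(L) \simeq L^{\otimes d}$ coming from the product structure. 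By the uniqueness part of Proposition~\ref{prop:fund:prop:dynamics}(2), $\varphi$ is the canonical global compactification of $L$ attached to $f$, so $h_f = h_{(L, \varphi)}$.

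Second, I would use the two standard properties of height functions associated with adelic line bundles: for metrized line bundles $(M_1, \psi_1)$ and $(M_2, \psi_2)$ on a variety $Z$, one has $h_{(M_1 \otimes M_2, \psi_1 \otimes \psi_2)} = h_{(M_1, \psi_1)} + h_{(M_2, \psi_2)}$, and for a morphism $\pi : Z' \to Z$, one has $h_{\pi^*(M_1, \psi_1)}(z') = h_{(M_1, \psi_1)}(\pi(z'))$. Applying these to $L = \bigotimes_i p_i^*(L_i)$ with metric $\varphi = \bigotimes_i p_i^*(\varphi_i)$ yields
\[
h_f(x_1, \ldots, x_n) = \sum_{i=1}^n h_{p_i^*(L_i, \varphi_i)}(x_1, \ldots, x_n) = \sum_{i=1}^n h_{(L_i, \varphi_i)}(x_i) = \sum_{i=1}^n h_{f_i}(x_i),
\]
which is the first claimed identity.

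The second identity then follows by applying the first to $f(x_1, \ldots, x_n) = (f_1(x_1), \ldots, f_n(x_n))$ and invoking Proposition~\ref{prop:fund:prop:dynamics}(4) factor by factor:
\[
h_f(f(x_1, \ldots, x_n)) = \sum_{i=1}^n h_{f_i}(f_i(x_i)) = \sum_{i=1}^n d_i\, h_{f_i}(x_i).
\]
No step is really difficult; the only genuine subtlety is the identification of $\varphi$ with the canonical compactification $\varphi_{f,\alpha}$, which is handled by the uniqueness in Proposition~\ref{prop:fund:prop:dynamics}(2). After that, everything is a formal consequence of functoriality and additivity of heights.
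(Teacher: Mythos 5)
Your proof is correct and follows essentially the same route as the paper's: additivity of the height under tensor products plus functoriality under the projections $p_i$ gives the first identity, and the second follows by applying the first to $(f_1(x_1),\ldots,f_n(x_n))$ and then using Proposition~\ref{prop:fund:prop:dynamics}(4) factor by factor. One small inaccuracy in your preliminary discussion: when the $d_i$ are not all equal, $f^*(L)$ is not isomorphic to any power $L^{\otimes d}$ (in particular not $L^{\otimes \max_i d_i}$), so the uniqueness statement of Proposition~\ref{prop:fund:prop:dynamics}(2) does not literally apply here; the paper instead simply takes $h_f = h_{(L,\varphi)}$ with $\varphi = p_1^*(\varphi_1)\otimes\cdots\otimes p_n^*(\varphi_n)$ the factorwise compactification set up in the preceding paragraph, which is all that your subsequent computation actually uses.
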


\begin{proof}
{\allowdisplaybreaks
Indeed,
\begin{align*}
h_{f} (x_1, \ldots, x_n) & = h_{(p_1^*(L_1) \otimes \cdots \otimes p_n^*(L_n), p_1^*(\varphi_1) \otimes \cdots \otimes p_n^*(\varphi_n))} (x_1, \ldots, x_n) \\
& = h_{(p_1^*(L_1), p_1^*(\varphi_1))}  (x_1, \ldots, x_n) + \cdots + h_{(p_n^*(L_n), p_n^*(\varphi_n))}  (x_1, \ldots, x_n) \\
& = h_{f_1}(x_1) + \cdots + h_{f_n}(x_n).
\end{align*}
Moreover,
\begin{align*}
h_{f} (f(x_1, \ldots, x_n)) & = h_{f} (f_1(x_1), \ldots, f_n(x_n)) = h_{f_1}(f_1(x_1)) + \cdots + h_{f_n}(f_n(x_n)) \\
& = d_1h_{f_1}(x_1) + \cdots + d_nh_{f_n}(x_n),
\end{align*}
as required.}
\end{proof}

\begin{lemma}\label{lem:height:zero}
Let $S$ be a finite subset of $X(K)$. For each $i \in \{1, \ldots, n\}$, let $g_i : X_i \to X_i$ be an endomorphism of $X_i$ such that $g_i$ are polarized by $L_i$, $d(g_i) \in \ZZ_{\geqslant 2}$, and $f_i \circ g_i = g_i \circ f_i$. We set $g = g_1 \times \cdots \times g_n$.
If $\min\{  d(g_i) a(h_{f_i}) \mid i \in \{ 1, \ldots, n \} \} > \max \{ h_{f}(s) \mid s \in S \}$ and $g(x) \in S$ for $x \in X(K)$,
then $h_{f}(x) = 0$.
\end{lemma}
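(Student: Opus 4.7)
The plan is to imitate the single-factor argument in the earlier Lemma~\ref{lem:height:zero} (Section~2), using the preceding Proposition to reduce everything to coordinatewise assertions on the factors $X_i$.

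First I would compute $h_f(g(x))$ coordinatewise. By the Proposition immediately preceding this lemma (applied to the point $g(x) = (g_1(x_1),\ldots,g_n(x_n))$), one has
\[
h_f(g(x)) \;=\; \sum_{i=1}^{n} h_{f_i}(g_i(x_i)).
\]
Since $f_i$ and $g_i$ are both compatible with $L_i$, have $d(f_i),d(g_i)\geqslant 2$, and commute by hypothesis, Proposition~\ref{prop:fund:prop:dynamics}(5) gives $h_{f_i}=h_{g_i}$ for every $i$. Combined with Proposition~\ref{prop:fund:prop:dynamics}(4) this yields $h_{f_i}(g_i(x_i)) = h_{g_i}(g_i(x_i)) = d(g_i)\,h_{g_i}(x_i) = d(g_i)\,h_{f_i}(x_i)$, so
\[
h_f(g(x)) \;=\; \sum_{i=1}^{n} d(g_i)\, h_{f_i}(x_i).
\]

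Next I argue by contradiction. Suppose $h_f(x)>0$; since $h_f(x)=\sum_i h_{f_i}(x_i)$ with each summand nonnegative by Proposition~\ref{prop:fund:prop:dynamics}(2), there is some index $i_0$ with $h_{f_{i_0}}(x_{i_0})>0$, hence $h_{f_{i_0}}(x_{i_0})\geqslant a(h_{f_{i_0}})$ by the definition of $a(\cdot)$. Using nonnegativity of the remaining summands,
\[
h_f(g(x)) \;\geqslant\; d(g_{i_0})\, h_{f_{i_0}}(x_{i_0}) \;\geqslant\; d(g_{i_0})\, a(h_{f_{i_0}}) \;\geqslant\; \min_{1\leqslant i\leqslant n} d(g_i)\, a(h_{f_i}).
\]
On the other hand $g(x)\in S$ gives $h_f(g(x))\leqslant \max\{h_f(s)\mid s\in S\}$, and the hypothesis of the lemma says this maximum is strictly smaller than $\min_i d(g_i)\,a(h_{f_i})$, a contradiction. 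Therefore $h_f(x)=0$.

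There is essentially no serious obstacle: all the work is packaged in the earlier Proposition (which decomposes the height on the product) and in Proposition~\ref{prop:fund:prop:dynamics}(4)(5) (which handles commuting endomorphisms on each factor). The only point to be careful about is making sure Proposition~\ref{prop:fund:prop:dynamics}(5) is applicable in each coordinate — which it is, since $d(g_i)\geqslant 2$ and $f_i\circ g_i=g_i\circ f_i$ by assumption — and noting that $a(h_{f_i})>0$ (Proposition~\ref{prop:fund:prop:dynamics}(6)) so that the strict inequality in the hypothesis is not vacuous.
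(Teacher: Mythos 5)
Your proof is correct and follows essentially the same route as the paper's: decompose $h_f$ coordinatewise via the preceding Proposition, use Proposition~\ref{prop:fund:prop:dynamics}(4)(5) on each factor to get $h_f(g(x))=\sum_i d(g_i)h_{f_i}(x_i)$, and derive a contradiction from a positive coordinate (the paper sums over the set of all indices with positive height rather than isolating a single $i_0$, but the resulting lower bound $\min_i d(g_i)a(h_{f_i})$ is the same).
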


\begin{proof}
We set $x = (x_1, \ldots, x_n)$ and $I = \{ i \in \{ 1, \ldots, n \} \mid h_{f_{i}}(x_i) > 0 \}$. If we assume $I \not= \emptyset$,
then 
{\allowdisplaybreaks
\begin{align*}
\max \{ h_{f}(x) \mid x \in S \} & \geqslant h_{f}(g(x)) = \sum_{i=1}^n h_{f_i}(g_i(x_i)) = \sum_{i=1}^n h_{g_i}(g_i(x_i)) \\
& = \sum_{i=1}^n d(g_i) h_{g_i}(x_i) = \sum_{i=1}^n d(g_i) h_{f_i}(x_i) \\
& = \sum_{i \in I} d(g_i) h_{f_i}(x_i) \geqslant \sum_{i \in I} d(g_i) a(h_{f_i})  \\
& \geqslant \min\{  d(g_i) a(h_{f_i}) \mid i \in \{ 1, \ldots, n \} \},
\end{align*}}
which is a contradiction.
\end{proof}

For $I = (i_1, \ldots, i_n) \in (\ZZ_{\geqslant 1})^n$ and $1 \leqslant k \leqslant n$, we denote $i_k$ by $I(k)$.
We define $\min I$ to be $\min \{ I(k) \mid k \in \{ 1, \ldots, n \} \}$.
For $I, J \in (\ZZ_{\geqslant 1})^n$, we define $I \leqslant J$, $I + J$ and $I \cdot J$ to be
\[
\begin{cases}
I \leqslant J \ \Longleftrightarrow\ I(1) \leqslant J(1), \ldots, I(n) \leqslant J(n), \\
I + J = (I(1) + J(1), \ldots, I(n) + J(n)), \\
I \cdot J =  (I(1) \cdot J(1), \ldots, I(n) \cdot  J(n)).
\end{cases}
\]
Moreover, we say $I$ is \emph{prime} if $I(k)$ is a prime number for every $k \in \{ 1, \ldots, n \}$.

Let $F_1 = \{ f_{1, N} \}_{N=N_1}^{\infty}, \ldots, F_n = \{ f_{n, N} \}_{N=N_n}^{\infty}$
be systems of endomorphisms on $X_1, \ldots, X_n$, respectively such that $f_{k, N}$ is polarized by $L_k$, that is,
$f^*_{k, N}(L_k) \simeq L^{\otimes d_{k, N}}$ ($d_{k, N} \in \ZZ_{\geqslant 2}$).
We set $I_0 = (N_1, \ldots, N_n)$. For $I \geqslant I_0$, 
the endomorphism  $f_{1, I(1)} \times \cdots \times f_{n, I(n)}$ on $X$ is denoted by $f_I$, and
the collection $\{ f_{I} \}_{I \in (\ZZ_{\geqslant 1})^n_{\geqslant N_0}}$ is denoted by $F$. Note that the following properties are easily verified:
\begin{enumerate}[label=\rm (\arabic*)]
\item For $I, I' \in (\ZZ_{\geqslant 1})^n_{\geqslant N_0}$, $f_I \circ f_{I'} = f_{I'} \circ f_{I}$.
\item $f_I^*(p_1^*(L_1) \otimes \cdots \otimes p_n^*(L_n)) = p_1^*(L_1)^{\otimes d_{1, I(1)} } \otimes \cdots \otimes p_n^*(L_n)^{\otimes d_{n, I(n)} }$.
\end{enumerate}
Moreover, note that the height function $h_{f_I}$ does not depend on the choice of $I$, so it  is denoted by $h_F$.
Let $Y$ be an equidimensional subscheme of $X$, and $Y_I = f^{-1}_I(Y)$.
We say $Y_I$ has \emph{Fermat's property over $K$} if \[Y_I(K) \subseteq \{ x \in X(K) \mid h_F(x) = 0 \}.\]

\begin{corollary}
If $F$ is multiplicative, that is, $f_I \circ f_{I'} = f_{I'} \circ f_{I} = f_{I \cdot I'}$ for all $I, I' \in  (\ZZ_{\geqslant 1})^n_{\geqslant N_0}$ and $Y_I(K)$ is finite, then there is $M_0$ such that 
$Y_{I \cdot M}$ has Fermat's property for all $M \geqslant M_0$.
\end{corollary}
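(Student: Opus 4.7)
The plan is to reduce the corollary to the multi-indexed Lemma~\ref{lem:height:zero} by taking $S = Y_I(K)$ (which is finite by hypothesis) and using $g = f_M$ as the auxiliary endomorphism for $M$ sufficiently large.

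First I would record the crucial geometric observation: if $x \in Y_{I \cdot M}(K)$, then multiplicativity gives $f_{I \cdot M} = f_I \circ f_M$, so
\[
f_I(f_M(x)) = f_{I \cdot M}(x) \in Y(K),
\]
which means $f_M(x) \in f_I^{-1}(Y)(K) = Y_I(K) = S$. Thus $f_M$ carries every $K$-rational point of $Y_{I \cdot M}$ into the finite set $S$.

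Next I would arrange the quantitative hypothesis needed by Lemma~\ref{lem:height:zero}. Since each $F_k$ is a system of endomorphisms, $\lim_{N\to\infty} d_{k, N} = \infty$, and by Proposition~\ref{prop:fund:prop:dynamics}(6) each $a(h_{F_k})$ is strictly positive. Because $S$ is finite, $\max\{h_F(s) \mid s \in S\}$ is a finite real number, and we can therefore choose $M_0 \in (\ZZ_{\geqslant 1})^n_{\geqslant N_0}$ so that for every $M \geqslant M_0$,
\[
\min_{1 \leqslant i \leqslant n} d_{i, M(i)} \cdot a(h_{F_i}) \;>\; \max\{ h_F(s) \mid s \in S\}.
\]

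Finally, I would invoke Lemma~\ref{lem:height:zero} with $g_i = f_{i, M(i)}$, taking the reference endomorphisms in the lemma to be $f_i = f_{i, N_i}$ (so that $h_{f_i} = h_{F_i}$ by Proposition~\ref{prop:sys:endo:property}(1)); the commutation relation $f_{i, N_i} \circ f_{i, M(i)} = f_{i, M(i)} \circ f_{i, N_i}$ required by the lemma is precisely property~(3) in the definition of a system of endomorphisms. The lemma then yields $h_F(x) = 0$ for every $x \in Y_{I \cdot M}(K)$, which is exactly Fermat's property for $Y_{I \cdot M}$. No real obstacle arises: the multi-indexed Lemma~\ref{lem:height:zero} was designed for this kind of application, so the only work is unwinding the notation and verifying the size condition, which is immediate from $d_{k, N} \to \infty$.
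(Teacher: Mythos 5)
Your proposal is correct and matches the paper's intended argument: the paper's proof is literally the one-line statement that the corollary follows from the multi-indexed Lemma~\ref{lem:height:zero}, and your unwinding (take $S = Y_I(K)$, use multiplicativity to see $f_M$ sends $Y_{I\cdot M}(K)$ into $S$, choose $M_0$ via $d_{i,N}\to\infty$ and $a(h_{F_i})>0$ so the lemma's size condition holds) is exactly the verification being left to the reader. Nothing is missing.
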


\begin{proof}
This is a consequence of Lemma~\ref{lem:height:zero}.
\end{proof}

\begin{conjecture}[Multi-indexed version of generalized Fermat's conjecture]\label{mult:conj:gen:Fermat:conj}
If there is $I_1 \geqslant I_0$ such that $Y_I(K)$ is finite for any $I \geqslant I_1$, 
then does there exist $I_2 \geqslant I_0$ such that
$Y_I$ has Fermat's property over $K$ for any $I \geqslant I_2$?
\end{conjecture}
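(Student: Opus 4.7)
The plan is to mirror the proof of Theorem~\ref{thm:main} as closely as possible, using the multi-indexed Lemma~\ref{lem:height:zero} as the central tool. In each case one fixes a finite set $S$ of the form $Y_{J}(K)$ for some $J \geqslant I_1$ (whose finiteness is granted by hypothesis), exhibits a commuting endomorphism $g = f_M$ with $g(Y_I(K)) \subseteq S$, and checks the height inequality $\min_k d(g_k)\,a(h_{f_k}) > \max_{s\in S} h_F(s)$; the lemma then forces $h_F(x)=0$ on $Y_I(K)$, which is Fermat's property.

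The additive subcase, where each $F_k$ is additive so that $f_I \circ f_{I'} = f_{I+I'}$, goes through directly. Fix $I_1$ with $Y_{I_1}(K)$ finite, set $S := Y_{I_1}(K)$, and for $I \geqslant I_1$ put $M := I - I_1$. Since $f_I = f_{I_1} \circ f_M$, we have $f_M(Y_I(K)) \subseteq Y_{I_1}(K) = S$. Additivity yields $d_{k,M(k)} = d(f_{k,1})^{M(k)} \to \infty$ as $M(k) \to \infty$, so there are constants $C_k$ with $d_{k,M(k)}\,a(h_{f_k}) > \max_{s\in S} h_F(s)$ whenever $M(k) \geqslant C_k$. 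Lemma~\ref{lem:height:zero} then gives Fermat's property for every $I \geqslant I_2 := I_1 + (C_1,\ldots,C_n)$, settling the conjecture in this subcase.

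For a multiplicative $F$ the same strategy, aggregated via a multi-indexed analogue of \cite[Lemma~5.16]{IKMFaltings}, delivers only the density statement paralleling Theorem~\ref{thm:main}(3): for any prime multi-index $P \geqslant I_1$ and any $I = P \cdot M$ with each $M(k)$ sufficiently large, $f_M(Y_I(K)) \subseteq Y_P(K)$ is finite, so Lemma~\ref{lem:height:zero} applies and $Y_I$ has Fermat's property. Aggregating over $P$ shows that the density of $I$ in a growing box for which Fermat's property holds tends to $1$.

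The hard part, and the genuine obstruction to the full conjecture, is the same as in the single-index case: the threshold on $M$ depends on $\max_{s \in Y_{P}(K)} h_F(s)$, which is not a priori uniform in $P$. There exist $I$ (for instance those of the form $(p^{a_1},\ldots,p^{a_n})$ with one small prime $p$ and large exponents $a_k$) whose only factorizations $I = I' \cdot M$ with $I' \geqslant I_1$ force $I'$ itself into regions where the relevant heights are not controlled, so Lemma~\ref{lem:height:zero} cannot be invoked uniformly. Closing this gap would require a bound $\max_{s \in Y_{I'}(K)} h_F(s) \leqslant C$ independent of $I' \geqslant I_1$, which is precisely the missing ingredient in the single-index Conjecture~\ref{conj:gen:Fermat:conj}. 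I therefore expect any proof of Conjecture~\ref{mult:conj:gen:Fermat:conj} to require, or to be developed alongside, a proof of its single-index counterpart.
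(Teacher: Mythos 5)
The statement is posed as an open conjecture, and the paper offers no proof of it --- only the density-one evidence of Theorem~\ref{thm:prob:1:mult}, obtained from the multi-indexed Lemma~\ref{lem:height:zero} together with Lemma~\ref{lemma:proba:1:mult:index}. You correctly treat it as open and your partial results (the additive subcase, mirroring Theorem~\ref{thm:main}(2), and the multiplicative density statement, which is exactly Theorem~\ref{thm:prob:1:mult}) are proved by essentially the same method the paper uses, with a fair diagnosis of the remaining obstruction.
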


One has the following as an evidence of the above conjecture.

\begin{theorem}\label{thm:prob:1:mult}
If $F$ is multiplicative and there is $I_1 \geqslant I_0$ such that $Y_I(K)$ is finite for all $I \geqslant I_1$,
then
\[
\lim_{m_1\to\infty, \ldots, m_n\to\infty} \frac{ \#\left\{ I \in (\ZZ_{\geqslant 1})^n \ \left| \  \begin{array}{l} \text{$I_0 \leqslant I \leqslant (m_1, \ldots, m_n)$ and } \\ \text{$Y_I$ has Fermat's property over $K$} \end{array}\right.\right\}}{m_1 \cdots m_n} = 1.
\]
\end{theorem}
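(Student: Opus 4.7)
The plan is to imitate the two-step argument in the proof of Theorem~\ref{thm:main}(3): first a dynamical reduction via the multi-indexed height estimate, and then a purely combinatorial density statement.

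For the dynamical step, I would fix any prime multi-index $P \geqslant I_1$. By hypothesis, $Y_P(K)$ is finite, so
\[
C_P := \max\{ h_F(s) \mid s \in Y_P(K)\} < \infty.
\]
For any $M \geqslant I_0$ and any $x \in Y_{P\cdot M}(K)$, multiplicativity of $F$ gives $f_P(f_M(x)) = f_{P \cdot M}(x) \in Y(K)$, so $f_M(x) \in Y_P(K)$. I would then apply the multi-indexed Lemma~\ref{lem:height:zero} with $S = Y_P(K)$ and $g_k = f_{k, M(k)}$, which commutes with $f_{k, N_k}$ by multiplicativity; using that $d_{k, M(k)} \to \infty$ as $M(k) \to \infty$ together with $a(h_{F_k}) > 0$ from Proposition~\ref{prop:fund:prop:dynamics}(6), I would extract $M_0(P) \geqslant I_0$ such that
\[
\min_{1 \leqslant k \leqslant n} d_{k, M(k)}\, a(h_{F_k}) > C_P \quad \text{for all } M \geqslant M_0(P).
\]
The lemma then forces $h_F(x) = 0$, giving the key intermediate statement: $Y_{P\cdot M}$ has Fermat's property over $K$ for every prime $P \geqslant I_1$ and every $M \geqslant M_0(P)$. (This is essentially the Corollary preceding the theorem, specialised to $I = P$.)

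For the density step, I would observe that the set of indices $I$ enjoying Fermat's property contains
\[
T := \bigcup_{P \text{ prime},\, P \geqslant I_1}\{P \cdot M \mid M \geqslant M_0(P)\},
\]
and then invoke the multi-indexed density lemma already used in the single-indexed case (Lemma~\ref{lemma:proba:1:mult:index}; see also \cite[Lemma~5.16]{IKMFaltings}) to conclude that the counting density of $T$ in the box $[I_0, (m_1, \ldots, m_n)]$ tends to $1$ as $m_1, \ldots, m_n \to \infty$, which is precisely the conclusion of the theorem.

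The dynamical step is essentially a direct reuse of the Corollary preceding the theorem, so the main obstacle will be the combinatorial density assertion. Intuitively it holds because each coordinate of $I$ has, with high frequency, a prime factor beyond any fixed threshold, and the simultaneous failure event across coordinates has vanishing density; the delicate point is that $M_0(P)$ depends on the whole tuple $P$, so the precise bookkeeping requires the uniform tail control provided by the cited density lemma, which is what makes that lemma, rather than the dynamical input, the real content of the statement.
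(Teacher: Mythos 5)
Your proposal is correct and follows essentially the same route as the paper: the dynamical step is precisely the Corollary preceding the theorem (an application of the multi-indexed Lemma~\ref{lem:height:zero}), specialised to prime multi-indices $P \geqslant I_1$, and the density step is exactly Lemma~\ref{lemma:proba:1:mult:index}, which is also what the paper invokes. No gaps.
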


\begin{proof}
This is a consequence of the above corollary and the following lemma.
\end{proof}

\begin{lemma}\label{lemma:proba:1:mult:index}
Let $\Sigma$ be a subset of $(\ZZ_{\geqslant 1})^n$.
We assume that there is a positive integer $p_0$ with the following properties:
for any prime $P \in (\ZZ_{\geqslant 1})^n$ with $\min P \geqslant p_0$ (i.e., $P(i)$ is a prime number and $P(i) \geqslant p_0$ for every $i$),
there is $M' \in (\ZZ_{\geqslant 1})^n$ such that $P \cdot M \in \Sigma$ for any $M \geqslant M'$.
Then
\[
\lim_{m_1\to\infty, \ldots, m_n\to\infty}  \frac{ \#\{ I \in (\ZZ_{\geqslant 1})^n \mid \text{$I \in \Sigma$ and $I \leqslant (m_1, \ldots, m_n)$} \}}{m_1 \cdots m_n} = 1.
\]
\end{lemma}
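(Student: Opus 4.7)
The plan is to show that $\Sigma^c$ has density zero in $(\ZZ_{\geqslant 1})^n$. Fix $\epsilon > 0$ and write $R_m := [1, m_1] \times \cdots \times [1, m_n]$. Since $\sum_{p} 1/p$ diverges, the product $\prod_{p_0 \leqslant p \leqslant B}(1-1/p)$ tends to $0$ as $B \to \infty$ (Mertens), so I first choose $B \geqslant p_0$ with $\prod_{p_0 \leqslant p \leqslant B}(1-1/p) < \epsilon/(4n)$. The argument then decomposes $\Sigma^c \cap R_m$ into two parts according to the arithmetic structure of the coordinates.

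For the first part, let $E_B$ denote the set of positive integers admitting no prime factor in $[p_0, B]$. Then $E_B$ is a union of residue classes modulo $\prod_{p_0 \leqslant p \leqslant B} p$, hence has natural density $\prod_{p_0 \leqslant p \leqslant B}(1-1/p) < \epsilon/(4n)$, so $\#(E_B \cap [1, m_k]) < (\epsilon/(2n)) m_k$ for every $m_k$ sufficiently large. Consequently the number of $I \in R_m$ with $I(k) \in E_B$ for some $k$ is bounded by $\sum_{k=1}^n (\epsilon/(2n)) m_k \prod_{j \neq k} m_j = (\epsilon/2) m_1 \cdots m_n$.

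For the second part, consider those $I \in R_m$ such that every coordinate $I(k)$ admits a prime factor in $[p_0, B]$; let $P(I)(k)$ be the smallest such prime factor, and write $I = P(I) \cdot M(I)$ componentwise. The prime tuples $P$ satisfying $p_0 \leqslant P(k) \leqslant B$ for all $k$ form a finite set $\{P^{(1)}, \ldots, P^{(L)}\}$, and by hypothesis each $P^{(l)}$ comes with a threshold $M'_{P^{(l)}}$. Setting $M^*(k) := \max_{l} M'_{P^{(l)}}(k)$, the condition $M(I) \geqslant M^*$ forces $M(I) \geqslant M'_{P(I)}$ and hence $I \in \Sigma$. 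Thus any such $I$ in $R_m \cap \Sigma^c$ must be of the form $P^{(l)} \cdot M$ with $M(k) < M^*(k)$ for some $k$, and the count of these in $R_m$ is at most $L \sum_k M^*(k) \prod_{j \neq k} (m_j/p_0) = o(m_1 \cdots m_n)$; for $m$ large this is less than $(\epsilon/2) m_1 \cdots m_n$. Combining the two bounds yields $\#(\Sigma^c \cap R_m) < \epsilon \, m_1 \cdots m_n$ for $m$ sufficiently large, and since $\epsilon$ is arbitrary the lemma follows.

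The main delicate point is that the threshold $M'_P$ depends on the prime tuple $P$, so no uniform threshold exists over the infinitely many admissible $P$; the Mertens-type sieve estimate on $E_B$ is precisely what lets us restrict to a finite collection of prime tuples, on which a single uniform bound $M^*$ can be formed and the leftover error is negligible.
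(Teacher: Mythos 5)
Your proof is correct and follows essentially the same route as the paper: both use the divergence of $\sum_p 1/p$ to fix finitely many primes $\geqslant p_0$ so that integers with no prime factor among them have density at most $\varepsilon/(4n)$, then exploit the finiteness of the resulting set of prime tuples to get a uniform threshold beyond which membership in $\Sigma$ is guaranteed. The only difference is bookkeeping for the below-threshold exceptions (the paper absorbs them into the single condition $I \not\geqslant M_0$ with $M_0 = \sum_Q Q M_Q$, while you count the tuples $P^{(l)}\cdot M$ with some $M(k) < M^*(k)$ directly), and both give the same $o(m_1\cdots m_n)$ bound.
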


\begin{proof}
Since the Riemann zeta function has a pole at $1$, for a positive number $\varepsilon$ with $0 < \varepsilon \leqslant 1$,
let $p_1, \ldots, p_{\ell}$ be prime numbers such that $p_0 \leqslant p_1 < \cdots < p_{\ell}$ and
$(1-1/p_1) \cdots (1-1/p_{\ell}) \leqslant \varepsilon/(4n)$.
For each $Q \in \{ p_1, \ldots, p_{\ell} \}^n$, by our assumption, there exists $M_Q \in (\ZZ_{\geqslant 1})^n$ such that
$Q \cdot M \in \Sigma$ for all $M \geqslant M_Q$.

\begin{claim}
If we set $e=p_1 \cdots p_{\ell}$, then we can find $M_0 \in \ZZ_{\geqslant 1}^n$ such that
\[
\{ I \in (\ZZ_{\geqslant 1})^n \mid \text{$I \geqslant M_0$ and $\GCD(I(i), e) \not= 1$ for all $i \in \{ 1, \ldots, n \}$} \} \subseteq \Sigma
\]
\end{claim}

\begin{proof}
We set $M_0 = \sum_{Q \in \{ p_1, \ldots, p_{\ell} \}^n} Q M_Q$. Let $I$ be an element  of $(\ZZ_{\geqslant 1})^n$ such that
$I \geqslant M_0$ and $\GCD(I(i), e) \not= 1$ for all $i \in \{ 1, \ldots, n \}$.
Then, for each $i \in \{1, \ldots, n \}$, there is $q_i \in \{  p_1, \ldots, p_{\ell} \}$ such that $q_i \mid I(i)$.
Thus, if we set $Q = (q_1, \ldots, q_n)$, then $I = Q \cdot M$ for some $M \in (\ZZ_{\geqslant 1})^n$. Moreover,
\[
QM_Q \leqslant M_0 \leqslant I = Q \cdot M,
\]
so $M \geqslant M_Q$, and hence $I \in \Sigma$.
\end{proof}

We set $M_0 = (a_1, \ldots, a_n)$. Then we have the following:

\begin{claim} If $m_1 \geqslant a_1, \ldots, m_n \geqslant a_n$, then
\[
\frac{ \# \{ I \in (\ZZ_{\geqslant 1})^n \setminus \Sigma \mid I \leqslant (m_1, \ldots, m_n) \}}{m_1 \cdots m_n}\leqslant  \sum_{i=1}^n \frac{a_i-1}{m_i} + (1-1/p_1)\cdots (1-1/p_{\ell})\sum_{i=1}^n \frac{m_i + e}{m_i} 
\]
\end{claim}

\begin{proof}
By the above claim,
\begin{align*}
(\ZZ_{\geqslant 1})^n \setminus  \Sigma & \subseteq \{ I \in (\ZZ_{\geqslant 1})^n \mid I \not\geqslant M_0\} \cup \{ I \in (\ZZ_{\geqslant 1})^n \mid 
\text{$\exists i \in \{ 1, \ldots, n \}$ $\GCD(I(i), e) = 1$}\} \\
& \kern-2em \subseteq \{ I \in (\ZZ_{\geqslant 1})^n \mid \text{$\exists i \in \{ 1, \ldots, n \}$ $I(i) < a_i$}\} \cup \bigcup_{i=1}^n \{ I \in (\ZZ_{\geqslant 1})^n \mid 
\text{$\GCD(I(i), e) = 1$}\} \\
 & \kern-2em \subseteq \bigcup_{i=1}^n \{ I \in (\ZZ_{\geqslant 1})^n \mid \text{$I(i) < a_i$} \} \cup \bigcup_{i=1}^n \{ I \in (\ZZ_{\geqslant 1})^n \mid 
\text{$\GCD(I(i), e) = 1$}\} 
\end{align*}
Thus, by \cite[the proof of Lemma~5.16]{IKMFaltings}, if $m_1 \geqslant a_1, \ldots, m_n \geqslant a_n$, then
{\allowdisplaybreaks\begin{multline*}
\frac{ \# \{ I \in (\ZZ_{\geqslant 1})^n \setminus \Sigma \mid I \leqslant (m_1, \ldots, m_n) \}}{m_1 \cdots m_n} \\
\kern-10em 
\leqslant  \sum_{i=1}^n 
\frac{\# \{ I \in (\ZZ_{\geqslant 1})^n \mid \text{$I(i) < a_i$ and $I \leqslant (m_1, \ldots, m_n)$} \}}{m_1 \cdots m_n} \\
\kern5em 
+ \sum_{i=1}^n \frac{\#\{ I \in (\ZZ_{\geqslant 1})^n \mid \text{$\GCD(I(i), e) = 1$ and $I \leqslant (m_1, \ldots, m_n)$} \}}{m_1 \cdots m_n} \\
\leqslant  \sum_{i=1}^n \frac{a_i-1}{m_i} + \sum_{i=1}^n \frac{(1-1/p_1)\cdots (1-1/p_{\ell})(m_i + e) }{m_i},
\end{multline*}}%
as required.
\end{proof}

We choose positive integers $b_1, \ldots, b_n$ such that $(a_i - 1)/b_i \leqslant \varepsilon/(2n)$ and $e/b_i \leqslant \varepsilon$.
Then, by the above claim, if $m_i \geqslant \max \{ a_i, b_i\}$ for all $i \in \{ 1, \ldots, n \}$, we have
\[
\frac{ \# \{ I \in (\ZZ_{\geqslant 1})^n \setminus \Sigma \mid I \leqslant (m_1, \ldots, m_n) \}}{m_1 \cdots m_n} \leqslant \varepsilon/2 + (\varepsilon/4) (1 + \varepsilon) \leqslant  \varepsilon,
\]
because $0 < \varepsilon \leqslant 1$, 
and hence the assertion follows.
\end{proof}

\begin{example}\label{example:P:P}
Let $f_{(A, A')}$ $(A, A'\geqslant 2$) be the endomorphism on $\PP^1 \times \PP^1 (= \Proj(K[X_0, X_1]) \times \Proj(K[Y_0, Y_1]))$
defined to be \[f_{(A, A')}((x_0 : x_1) \times (y_0 : y_1)) = (x_0^A : x_1^A) \times (y_0^{A'} : y_1^{A'}).\]
Let $p_i : \PP^1 \times \PP^1 \to \PP^1$ be the projection to the $i$-th factor, and $L = p_1^*(\OO_{\PP^1}(1)) \otimes p_2^*(\OO_{\PP^1}(1))$.
Note that $f_{(A, A')} \circ f_{(B, B')} = f_{(B, B')} \circ f_{(A, A')} = f_{(AB, A'B')}$ (multiplicative).
Let $Y$ be a subvariety of $\PP^1 \times \PP^1$ given by \[\alpha X_0Y_0+ \beta X_1 Y_1 + \gamma X_0 Y_1 + \delta X_1 Y_0 =  (X_0\ X_1)\Pi \begin{pmatrix} Y_0 \\ Y_1 \end{pmatrix}  = 0\quad(\alpha, \beta, \gamma, \delta \in K),\]
where $\Pi = \begin{pmatrix} \alpha & \gamma \\ \delta & \beta \end{pmatrix}$.
Then $Y_{(A,A')}$ is given by \[\alpha X_0^AY_0^{A'}+ \beta X_1^A Y_1 ^{A'}+ \gamma X_0 ^AY_1^{A'} + \delta X_1^A Y_0^{A'}  = (X^A_0 \ X^A_1 )\Pi\begin{pmatrix} Y^{A'}_0 \\ Y^{A'}_1 \end{pmatrix} = 0.\]
We assume that $\alpha\beta\gamma\delta \not= 0$ and $\det \Pi \not= 0$.
Then it is easy to see that $Y_{(A, A')}$ is smooth over $K$ for all $A,A' \geqslant 2$, so its genus is $(A-1)(A'-1)$.
Thus, Theorem~\ref{thm:prob:1:mult} holds in this case.
\end{example}

\bibliography{DynamicFermat}
\bibliographystyle{plain}

\end{document}